\documentclass{birkjour}
 \newtheorem{thm}{Theorem}[section]

 \theoremstyle{definition}
 
 \theoremstyle{remark}

 \numberwithin{equation}{section}
\usepackage{xcolor}
\usepackage{changes}

\usepackage[hidelinks]{hyperref}
\usepackage{cleveref}

\allowdisplaybreaks[4]

\begin{document}

%
%
%
%
%
%
%
%
%

\title[$H$-tensional hypersurfaces in 4-dimensional space forms]
 {$H$-tensional hypersurfaces in 4-dimensional space forms}

\author{Bouazza Kacimi}
\address{%
University of Mascara\\
BP 305 Route de Mamounia\\
29000 Mascara\\
Algeria}
\email{bouazza.kacimi@univ-mascara.dz}

\author[Ahmed Mohammed Cherif]{Ahmed Mohammed Cherif}

\address{%
University of Mascara\\
BP 305 Route de Mamounia\\
29000 Mascara\\
Algeria}

\email{a.mohammedcherif@univ-mascara.dz}

\author{Mustafa \"{O}zkan}
\address{ Gazi University \\
Faculty of Sciences\\
Department of Mathematics \\
06500 Ankara\\
Turkey}
\email{ozkanm@gazi.edu.tr}

\subjclass{Primary 53C21; Secondary 53C50}

\keywords{$H$-tensional hypersurface, space form, mean curvature.}

\date{January 1, 2004}

\begin{abstract}

	In this paper, we investigate the classification of $H$-tensional hypersurfaces $M$ in a $4$-dimensional space form $N^4(c)$ of constant sectional curvature $c$. Our results show that minimal hypersurfaces are the only $H$-tensional hypersurfaces in $4$-dimensional space forms, thereby providing an affirmative partial answer to the Conjecture~3 proposed in \cite{kacimi}.

\end{abstract}

\maketitle
\section{Introduction}
Let $\psi: (M,g) \rightarrow (N,h)$ be a smooth map between Riemannian manifolds of dimensions $m$ and $n$, respectively. It is well known that $\psi$ is harmonic (see \cite{baird,eells2}) if its tension field $\tau(\psi)$ vanishes; that is,
\begin{equation*}
	\tau(\psi)=\operatorname{Tr}_{g}(\nabla d\psi)=0.
\end{equation*}
A natural generalization of harmonic maps is the $H$-tensional map, introduced in \cite{kacimi}. A map $\psi$ is said to be $H$-tensional if its tension field $\tau(\psi)$ satisfies
\begin{equation*}
	\Delta^{\psi} \tau(\psi)=0,
\end{equation*}
where $\Delta^{\psi}$ is the rough Laplacian. In \cite{kacimi}, several properties of $H$-tensional maps are investigated.
  In particular, for an isometric immersion $\iota: M^{m} \rightarrow N^{n}$ with mean curvature vector $H$, we introduce the notion of $H$-tensional submanifolds by requiring that $\Delta^{\iota} \tau(\iota) = \Delta^{\iota} (m H) = 0$. We prove that an $H$-tensional submanifold $M$ is minimal if it satisfies one of the following conditions:
  \begin{enumerate}
  	\item $M$ is a curve;
  	\item $M$ has constant mean curvature;
  	\item $M$ is compact;
  	\item $M$ is a hypersurface with at most two distinct principal curvatures in a real space form;
  	\item $M$ is a pseudo-umbilical submanifold of dimension $m \neq 4$;
  	\item $M$ is a totally umbilical hypersurface in an Einstein space.
  \end{enumerate}
  Motivated by these nonexistence results for nonminimal $H$-tensional submanifolds, the following conjectures were proposed in \cite{kacimi}:

\noindent\textbf{Conjecture 1.} The only $H$-tensional submanifolds of real space forms are the minimal ones.

\noindent\textbf{Conjecture 2.} The only complete $H$-tensional submanifolds of real space forms are the minimal ones.

These conjectures generalize, respectively, Chen's biharmonic conjecture (see \cite{chen2}) and its global version (see \cite{akutagawa}). For recent developments related to these problems, we refer to \cite{chen2} and the references therein.

In this paper, by adapting a method developed by Defever in \cite{defever} and by Balmu\c{s}, Montaldo, and Oniciuc \cite{balmus}, we prove that any $H$-tensional hypersurface with at most three distinct principal curvatures in a $4$-dimensional space form is minimal (Theorem~\ref{Th4.100}). This result supports Conjecture~1.

\section{$H$-tensional hypersurfaces in 4-dimensional space forms}

Let $\iota: M^{m} \longrightarrow N^{m+1}(c)$ be the canonical inclusion of a hypersurface $M$ in a manifold with constant sectional curvature $c$, $N^{m+1}(c)$. Then, we have the following characterization result obtained in \cite{kacimi}:
\begin{thm}
	A hypersurface $\iota: M^{m} \longrightarrow N^{m+1}(c)$ in a space of constant sectional curvature $c$ with mean curvature vector $H = \alpha e_{m+1}$ is $H$-tensional if and only if
	\begin{equation}\label{eq12 200}
		\begin{cases}
			\Delta \alpha + \alpha |A|^{2} = 0, \\
			\frac{m}{4} \operatorname{grad} \alpha^{2} + A(\operatorname{grad} \alpha) = 0,
		\end{cases}
	\end{equation}
	where $A$ denotes the Weingarten operator and $\Delta \alpha$ is the Laplacian of $\alpha$.
\end{thm}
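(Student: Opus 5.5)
We derive the two conditions by computing $\Delta^{\iota}\tau(\iota)$ for $\tau(\iota)=mH=m\alpha\,\eta$, where $\eta=e_{m+1}$ is the unit normal, and splitting the result into normal and tangent parts. Throughout we use the convention $\Delta^{\iota}=-\operatorname{Tr}_g(\nabla^{\iota}\nabla^{\iota}-\nabla^{\iota}_{\nabla})$ and $\Delta\alpha=-\operatorname{div}\operatorname{grad}\alpha$, which is the one matching the sign in \eqref{eq12 200}. Since $m$ is constant, it suffices to compute $\Delta^{\iota}(\alpha\eta)$.

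Fix a local orthonormal frame $\{e_i\}_{i=1}^m$ on $M$, geodesic at the point $p$. The Weingarten formula gives $\nabla^{N}_{e_i}\eta=-A e_i$. Hence
\[
\nabla^{N}_{e_i}(\alpha\eta)=e_i(\alpha)\,\eta-\alpha A e_i .
\]
Differentiating once more, using the Gauss formula $\nabla^N_X Y=\nabla_X Y+\langle AX,Y\rangle\eta$ and summing over $i$ at $p$, we get
\[
\sum_i \nabla^N_{e_i}\nabla^N_{e_i}(\alpha\eta)
=(-\Delta\alpha)\eta-2A(\operatorname{grad}\alpha)-\alpha\operatorname{Tr}(\nabla A)-\alpha|A|^2\eta .
\]
Here $\operatorname{Tr}(\nabla A)=\sum_i(\nabla_{e_i}A)e_i$, and the term $-\alpha|A|^2\eta$ comes from $\sum_i\langle Ae_i,Ae_i\rangle$.

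The one non-routine step is the divergence of $A$. The Codazzi equation holds in $N^{m+1}(c)$ because the ambient curvature term $R^N(X,Y)\eta$ is tangent-free for a space form. It makes $\nabla A$ symmetric, so
\[
\sum_i(\nabla_{e_i}A)e_i=\operatorname{grad}(\operatorname{tr}A)=m\operatorname{grad}\alpha ,
\]
using $\operatorname{tr}A=m\alpha$, which fixes the sign convention $H=\frac1m\operatorname{tr}(A)\,\eta$. Then $\alpha\, m\operatorname{grad}\alpha=\frac m2\operatorname{grad}\alpha^2$.

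Assembling these pieces,
\[
\Delta^{\iota}(\alpha\eta)=(\Delta\alpha+\alpha|A|^2)\eta+2A(\operatorname{grad}\alpha)+\tfrac m2\operatorname{grad}\alpha^2 .
\]
The normal and tangent components vanish independently. The normal part gives the first equation of \eqref{eq12 200}, and the tangent part divided by $2$ gives the second. Conversely, if both equations hold then every component vanishes, which proves the equivalence.

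The main obstacle is bookkeeping of signs, since the Laplacian conventions and the orientation of $A$ relative to $\eta$ must all be consistent. I would check the final formula against the case $M$ totally umbilical, where the identity can be computed directly.
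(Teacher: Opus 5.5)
Your proof is correct. Decomposing $\Delta^{\iota}(\alpha\eta)$ with the Gauss and Weingarten formulas, and using Codazzi to get $\sum_i(\nabla_{e_i}A)e_i=m\operatorname{grad}\alpha$, gives exactly $(\Delta\alpha+\alpha|A|^2)\eta+2A(\operatorname{grad}\alpha)+\frac m2\operatorname{grad}\alpha^2$. Your convention $\Delta=-\operatorname{div}\operatorname{grad}$ also matches the one the paper uses when it computes $\Delta\alpha$ in the proof of Theorem~\ref{Th4.100}.

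The paper does not prove this characterization itself; it quotes it from \cite{kacimi}. Your normal/tangential splitting is the standard argument for it: the biharmonic-hypersurface computation without the ambient curvature term $-mc\alpha$ in the normal component.
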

\begin{thm}\label{Th4.100}
	Let $M^3$ be a hypersurface of the space form $N^{4}(c)$. Then, $M$ is an $H$-tensional submanifold if and only if it is minimal.
\end{thm}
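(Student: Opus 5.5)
Suppose $M^3\subset N^4(c)$ is $H$-tensional and not minimal. Then there is an open set $U$ on which $\alpha\neq 0$ and, by the constant-mean-curvature case recalled in the introduction, $\operatorname{grad}\alpha\neq 0$; all work below takes place on $U$, and the aim is a contradiction. With $m=3$, the second equation of the characterization theorem reads
\[
A(\operatorname{grad}\alpha) = -\tfrac{3}{2}\,\alpha\,\operatorname{grad}\alpha .
\]
So $e_1=\operatorname{grad}\alpha/|\operatorname{grad}\alpha|$ is a principal direction with principal curvature $\lambda_1=-\tfrac32\alpha$. Using $3\alpha=\lambda_1+\lambda_2+\lambda_3$, this gives $\lambda_2+\lambda_3=\tfrac{9}{2}\alpha$. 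Since $\lambda_1\neq \tfrac32\alpha$, shrinking $U$ we may assume the principal curvatures have constant multiplicities. If there are at most two distinct principal curvatures, case (4) of the results quoted from \cite{kacimi} already gives a contradiction. The essential case is therefore three distinct principal curvatures $\lambda_1,\lambda_2,\lambda_3$ with a local orthonormal frame $\{e_1,e_2,e_3\}$ of principal directions.

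Following Defever and Balmu\c{s}--Montaldo--Oniciuc, write $\nabla_{e_i}e_j=\sum_k\omega_{ij}^k e_k$. Because $e_2(\alpha)=e_3(\alpha)=0$, every $\lambda_i$ must be treated as a function of $\alpha$ along with its $e_1$-derivatives. The Codazzi equations $(\nabla_{e_i}A)e_j=(\nabla_{e_j}A)e_i$ then give:
\begin{itemize}
\item $e_j(\lambda_i)=(\lambda_i-\lambda_j)\omega_{ii}^j$ for $i\neq j$;
\item $(\lambda_i-\lambda_j)\omega_{ki}^j=(\lambda_k-\lambda_j)\omega_{ik}^j$ for distinct $i,j,k$.
\end{itemize}
Since $e_2(\lambda_1)=e_3(\lambda_1)=0$, we get $\omega_{11}^2=\omega_{11}^3=0$, so the integral curves of $e_1$ are geodesics. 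Also $[e_2,e_3]\alpha=0$ forces $\omega_{23}^1=\omega_{32}^1$. Combined with the second Codazzi relation and the distinctness of the $\lambda_i$, this yields $\omega_{23}^1=\omega_{32}^1=0$ and hence $\omega_{12}^3=\omega_{13}^2=0$. I would next show that $e_2(\lambda_3)$ and $e_3(\lambda_2)$ vanish, or more weakly that $\lambda_2,\lambda_3$ can be expressed along $e_1$. This should reduce all structure functions to $\omega_{22}^1$ and $\omega_{33}^1$, which Codazzi expresses as
\[
\omega_{22}^1=\frac{e_1(\lambda_2)}{\lambda_1-\lambda_2},\qquad
\omega_{33}^1=\frac{e_1(\lambda_3)}{\lambda_1-\lambda_3}.
\]

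Next come the Gauss equations for the sectional curvatures $K(e_1,e_2)$, $K(e_1,e_3)$ and $K(e_2,e_3)$, which equal $c+\lambda_i\lambda_j$. These give ODE-type relations along $e_1$:
\[
e_1(\omega_{22}^1)-(\omega_{22}^1)^2 = c+\lambda_1\lambda_2,\qquad
\omega_{22}^1\,\omega_{33}^1 = -(c+\lambda_2\lambda_3),
\]
together with the analogous relation for index $3$. The first equation of the characterization theorem becomes
\[
-e_1e_1(\alpha)+(\omega_{22}^1+\omega_{33}^1)\,e_1(\alpha)+\alpha\bigl(\lambda_1^2+\lambda_2^2+\lambda_3^2\bigr)=0 .
\]
Write $\lambda_2=\tfrac94\alpha+\mu$ and $\lambda_3=\tfrac94\alpha-\mu$. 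We then have unknowns $\alpha,\mu$ and first-order data, and the plan is to eliminate derivatives by differentiating the algebraic Gauss relation along $e_1$ and substituting the others. This should produce a nontrivial polynomial relation $P(\alpha,\mu,c)=0$, and then a second independent relation $Q(\alpha,\mu,c)=0$. The resultant in $\mu$ gives a nonzero polynomial in $\alpha$ with constant coefficients. It follows that $\alpha$ is constant on $U$, contradicting $\operatorname{grad}\alpha\neq0$. Hence $U=\emptyset$ and $\alpha\equiv0$. The converse is trivial, since minimal implies $\tau(\iota)=0$.

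The main obstacle is the elimination step. We must check that the polynomial obtained is not identically zero; degenerate cases such as $\mu$ depending linearly on $\alpha$ may need separate treatment. Before the elimination, we also need to rule out the case where $e_2(\mu)$ or $e_3(\mu)$ is nonzero. My first attempt would be to use the Gauss equation for $K(e_2,e_3)$ along with $[e_2,e_3]$ applied to $\lambda_2$ to force $e_2(\lambda_2)=e_3(\lambda_3)=0$. After that, I would carry out the elimination symbolically, as in \cite{balmus}.
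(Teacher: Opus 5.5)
Your outline follows the paper's own route. You use a frame with $e_1\parallel\operatorname{grad}\alpha$ and $\lambda_1=-\frac{3}{2}\alpha$, Codazzi to get $\omega_{11}^j=\omega_{23}^1=\omega_{12}^3=\omega_{13}^2=0$, Riccati-type Gauss equations, and elimination down to a polynomial in $\alpha$. Your deductions up to that point are correct, with one slip: your own Codazzi formula gives $\omega_{22}^1=e_1(\lambda_2)/(\lambda_2-\lambda_1)$, not $/(\lambda_1-\lambda_2)$, and that sign matters once it enters the Riccati equation.

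However, the two load-bearing steps are left open, and the tool you name for each would not do the job.

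\textbf{Vanishing of $a=\omega_{22}^3$ and $b=\omega_{33}^2$.} These are equivalent to $e_3(\lambda_2)$ and $e_2(\lambda_3)$. Write $\beta_i=\omega_{ii}^1$. Since $[e_2,e_3]=-ae_2+be_3$, applying the bracket to $\lambda_2$ gives only $e_2(a)=e_3(b)$. The Gauss equation for $K(e_2,e_3)$ reads $e_2(b)+e_3(a)-a^2-b^2-\beta_2\beta_3=c+\lambda_2\lambda_3$. These are integrability identities among $a,b$ and do not force them to vanish.

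What does force it is the first equation $\Delta\alpha=-|A|^2\alpha$, which your plan never differentiates in the $e_2,e_3$ directions. Because $[e_1,e_2]=\beta_2e_2$, you have $e_2e_1\alpha=e_2e_1e_1\alpha=0$. Codazzi plus Gauss give $e_2(\beta_2+\beta_3)=2b(\beta_3-\beta_2)(\lambda_2-\lambda_3)/(\lambda_2-\lambda_1)$ and $e_2(\lambda_2\lambda_3)=-(\lambda_2-\lambda_3)^2b$. So applying $e_2$ to the first equation gives
\[
b(\lambda_2-\lambda_3)\bigl[e_1(\alpha)(\beta_3-\beta_2)+\alpha(\lambda_2-\lambda_3)(\lambda_2-\lambda_1)\bigr]=0 .
\]
On the open set where $b\neq0$ the bracket vanishes. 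Applying $e_2$ to it there gives $3\alpha b(\lambda_2-\lambda_3)^2=0$, a contradiction. Symmetrically $a=0$. (The paper only cites Balmu\c{s}--Montaldo--Oniciuc for this step.)

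\textbf{Elimination.} Set $\kappa=\lambda_2\lambda_3$. Differentiating $\beta_2\beta_3=-(c+\kappa)$ along $e_1$ gives nothing. After inserting the Riccati equations and $e_1(\lambda_i)=(\lambda_i-\lambda_1)\beta_i$, both sides equal $-\kappa(\beta_2+\beta_3)+\lambda_1(\lambda_2\beta_3+\lambda_3\beta_2)$, so your $P$ cannot come from there.

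The information lies in the trace constraint $e_1(\lambda_2)+e_1(\lambda_3)=\frac{9}{2}e_1(\alpha)$. Differentiating it along $e_1$ gives $e_1e_1\alpha=\frac{7}{3}e_1(\alpha)(\beta_2+\beta_3)+\alpha(4\kappa+5c-9\alpha^2)$. Comparing with the Laplacian equation gives the first genuine relation $e_1(\alpha)(\beta_2+\beta_3)=\alpha A_1$, with the paper's $A_1,\dots,A_5$.

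The chain then continues as follows:
\begin{itemize}
\item One further $e_1$-derivative, using $e_1(\kappa)=A_3(\beta_2+\beta_3)-\frac{27}{4}\alpha e_1(\alpha)$, gives $e_1(\alpha)A_4=\alpha(\beta_2+\beta_3)A_5$. This expresses $(e_1\alpha)^2$ and $(\beta_2+\beta_3)^2$ rationally in $(\alpha,\kappa)$.
\item One more derivative gives the quartic \eqref{3.327}.
\item The second relation comes from differentiating \eqref{3.327} with $d\kappa/d\alpha=A_3A_4/(\alpha A_5)-\frac{27}{4}\alpha$; the cases $A_4A_5=0$ are handled separately.
\end{itemize}
Working with the symmetric quantities $\beta_2+\beta_3$ and $\kappa$ is what makes the system close up. Without this chain, and without a check that the final resultant is not identically zero, your last step is an expectation rather than a proof.
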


\begin{proof}
	Suppose that the mean curvature function $\alpha$ is not constant on $M$. Then, there exists an open subset $U$ of $M$ such that $(\operatorname{grad} \alpha^2)(p) \neq 0$ for all $p \in U$. We may assume that $\alpha > 0$ on $U$, and thus $(\operatorname{grad} \alpha)(p) \neq 0$ for all $p \in U$. If $U$ has at most two distinct principal curvatures, then by Theorem~4.1 in \cite{kacimi}, we conclude that its mean curvature is constant, yielding a contradiction.

Without loss of generality, we can suppose that all points in $U$ have three distinct principal curvatures. On $U$, we set $e_{4} = \frac{H}{|H|}$ and denote by $\alpha = |H|$ the mean curvature function of $U$ in $N^4(c)$, and let $k_i$ ($i = 1,2,3$) be the principal curvatures with respect to $e_{4}$.

Consequently, the hypothesis that $M$ is a nonminimal $H$-tensional hypersurface with at most three distinct principal curvatures in $N^4(c)$ and non-constant mean curvature implies the existence of an open connected subset $U$ of $M$ where $(\operatorname{grad} \alpha)(p) \neq 0$ and $\alpha(p) > 0$ for all $p \in U$. We shall now contradict the condition $(\operatorname{grad} \alpha)(p) \neq 0$.

Since $M$ is $H$-tensional in $N^4(c)$, the characterization \eqref{eq12 200} yields
\begin{equation*}
	\begin{cases}
		\Delta \alpha = -|A|^{2} \alpha, \\
		A(\operatorname{grad}\alpha) = -\frac{3}{2}\alpha \operatorname{grad}\alpha.
	\end{cases}
\end{equation*}

By setting $e_1 = \frac{\operatorname{grad}\alpha}{|\operatorname{grad}\alpha|}$ on $U$, it follows that $e_1$ is a principal direction with principal curvature $k_1 = -\frac{3}{2}\alpha$.

Recalling that $3\alpha = k_1 + k_2 + k_3$, it follows that
\begin{equation}\label{eq3.4}
	k_2 + k_3 = \frac{9}{2}\alpha.
\end{equation}

Let $\{e_1, e_2, e_3\}$ be an orthonormal frame field of principal directions and $\{\omega^a\}_{a=1}^3$ be the dual frame field to $\{e_a\}_{a=1}^3$ on $U$.

Since $e_1$ is parallel to $\operatorname{grad}\alpha$, it is clear that
\begin{equation*}
	e_i(\alpha) = \langle e_i, \operatorname{grad}\alpha \rangle = 0 \quad \text{for } i = 2,3,
\end{equation*}
which implies
\begin{equation} \label{eq3.6}
	\operatorname{grad}\alpha = e_1(\alpha) e_1.
\end{equation}
We write the covariant derivatives as
\[
\nabla e_a = \sum_{b=1}^3 \omega^b_a \otimes e_b, \quad \omega^b_a \in \mathcal{C}^\infty(T^*U).
\]

From the Codazzi equations for $M$, for mutually distinct indices $a, b, d \in \{1, 2, 3\}$, we obtain
\begin{equation}\label{eq3.7}
	e_a(k_b) = (k_a - k_b)\, \omega^b_a(e_b),
\end{equation}
and
\begin{equation}\label{eq3.8}
	(k_b - k_d)\, \omega^d_b(e_a) = (k_a - k_d)\, \omega^d_a(e_b).
\end{equation}

Applying \eqref{eq3.7} first with $a = 1$ and $b = i$, and then with $a = i$ and $b = j$ for $i \neq j$, we find
\[
\omega^1_i(e_i) = \frac{e_1(k_i)}{k_i - k_1}, \quad \omega^i_j(e_j) = \frac{e_i(k_j)}{k_j - k_i}.
\]

For $a = i$ and $b = 1$, since $e_i(k_1) = 0$, \eqref{eq3.7} implies that $\omega^1_i(e_1) = 0$, which allows us to write
\[
\omega^1_a(e_1) = 0 \quad \text{for } a = 1, 2, 3.
\]

Furthermore, since $e_i(\alpha) = 0$ implies $\langle [e_i, e_j], e_1 \rangle = 0$, it follows that $\omega^j_1(e_i) = \omega^i_1(e_j)$. Now, from \eqref{eq3.8}, setting $a = 1$, $b = i$, and $d = j$ with $i \neq j$, we obtain
\[
\omega^1_2(e_3) = \omega^2_3(e_1) = \omega^3_1(e_2) = 0.
\]

Consequently, the connection $1$-forms are determined by the following relations:
\begin{equation}\label{eq3.9}
	\left\{ 
	\begin{alignedat}{3}
		\omega^1_2(e_1) &= 0, &\quad & \omega^1_2(e_2) = \frac{e_1(k_2)}{k_2 + \frac{3}{2}\alpha} =: \beta_2, &\quad & \omega^1_2(e_3) = 0, \\
		\omega^1_3(e_1) &= 0, && \omega^1_3(e_2) = 0, && \omega^1_3(e_3) = \frac{e_1(k_3)}{k_3 + \frac{3}{2}\alpha} =: \beta_3, \\
		\omega^2_3(e_1) &= 0, && \omega^2_3(e_2) = \frac{e_3(k_2)}{k_3 - k_2} =: \rho_2, && \omega^2_3(e_3) = \frac{e_2(k_3)}{k_3 - k_2} =: \rho_3.
	\end{alignedat}
	\right.
\end{equation}

By invoking \eqref{eq3.4}, the squared norm of the second fundamental form is given by
\begin{align}\label{eq3.10}
	|A|^2 &= k_1^2 + k_2^2 + k_3^2 \nonumber\\
	&= k_1^2 + (k_2 + k_3)^2 - 2k_2k_3 \nonumber\\
	&= \frac{45}{2}\alpha^2 - 2\kappa,
\end{align}
where $\kappa = k_2k_3$.

Next, by \eqref{eq3.6}, the Laplacian of $\alpha$ is computed as follows:
\begin{align}\label{eqs107}
	\Delta \alpha &= \sum_{i=1}^{3} [(\nabla_{e_{i}}e_{i})\alpha - e_{i}(e_{i}(\alpha))] \notag \\
	&= \sum_{i=1}^{3} \left[ \sum_{k=1}^{3} \omega_i^k(e_{i})(e_{k}(\alpha)) - e_{i}(e_{i}(\alpha)) \right] \notag \\
	&= e_1(\alpha)(\beta_2 + \beta_3) - e_{1}(e_{1}(\alpha)).
\end{align}

Now, substituting \eqref{eq3.10} and \eqref{eqs107} into the equation $\Delta\alpha = -|A|^2 \alpha$, we obtain
\begin{equation}\label{eq3.12}
	e_1(e_1(\alpha)) - e_1(\alpha)(\beta_2 + \beta_3) + \left( 2\kappa - \frac{45}{2}\alpha^2 \right)\alpha = 0.
\end{equation}

We shall now employ the Gauss equation:
\begin{align}\label{eq3.14}
	\langle \tilde{R}(X, Y)Z, W \rangle &= \langle R(X,Y)Z, W \rangle + \langle B(X,Z), B(Y,W) \rangle \nonumber \\
	&\quad - \langle B(X,W), B(Y,Z) \rangle,
\end{align}
where $\tilde{R}$ denotes the curvature tensor of $N^4(c)$.

From the Gauss equation \eqref{eq3.14}, we obtain:
\begin{itemize}
	\item By setting $X = W = e_1$ and $Y = Z = e_i$ ($i=2,3$):
	\begin{equation}\label{3.15}
		\begin{aligned}
			e_1(\beta_2) &= \beta_2^2 + c - \tfrac{3}{2}\alpha k_2, \\
			e_1(\beta_3) &= \beta_3^2 + c - \tfrac{3}{2}\alpha k_3.
		\end{aligned}
	\end{equation}
	
	\item By setting $X = W = e_2$ and $Y = Z = e_3$:
	\begin{equation}\label{3.16}
		\kappa + c = e_2(\rho_3) - e_3(\rho_2) - \beta_2\beta_3 - \rho_2^2 - \rho_3^2,
	\end{equation}
	where $\kappa = k_2k_3$ is the intrinsic Gauss curvature of the $e_2 \wedge e_3$ plane.
\end{itemize}
Following a similar argument as in \cite{balmus}, one can show that $\rho_2$ and $\rho_3$ must vanish identically. Consequently, \eqref{3.16} reduces to
\begin{equation}\label{3.16-1}
	\kappa + c = -\beta_2\beta_3.
\end{equation}

By rewriting equations \eqref{3.15}, we obtain
\begin{equation*}
	\begin{cases}
		e_1(e_1(k_2)) = \frac{21}{2}\,\beta_2\,e_1(\alpha) + 2(\kappa + c)\left(k_3 + \tfrac{3}{2}\alpha\right) + \left(c - \tfrac{3}{2}\alpha k_2\right)\left(k_2 + \tfrac{3}{2}\alpha\right), \\
		e_1(e_1(k_3)) = \frac{21}{2}\,\beta_3\,e_1(\alpha) + 2(\kappa + c)\left(k_2 + \tfrac{3}{2}\alpha\right) + \left(c - \tfrac{3}{2}\alpha k_3\right)\left(k_3 + \tfrac{3}{2}\alpha\right).
	\end{cases}
\end{equation*}
Summing these two equations yields
\begin{equation}\label{eq3.18}
	e_1(e_1(\alpha)) = \tfrac{7}{3} e_1(\alpha)(\beta_2 + \beta_3) + \alpha(4\kappa + 5c - 9\alpha^2).
\end{equation}

Henceforth, we define the following coefficients:
\begin{align*}
	A_{1} &= -\tfrac{9}{2}\kappa - \tfrac{15}{4}c + \tfrac{189}{8}\alpha^2, \\
	A_{2} &= -\tfrac{13}{2}\kappa - \tfrac{15}{4}c + \tfrac{369}{8}\alpha^2, \\
	A_{3} &= \kappa + 9\alpha^2, \\
	A_{4} &= \tfrac{13}{2}\kappa + \tfrac{31}{4}c - 108\alpha^2, \\
	A_{5} &= \tfrac{13}{2}\kappa + \tfrac{15}{2}c - \tfrac{441}{4}\alpha^2.
\end{align*}

By combining \eqref{eq3.12} and \eqref{eq3.18}, we obtain
\begin{equation}\label{eq3.30}
	e_1(\alpha)(\beta_2 + \beta_3) = \alpha A_{1}.
\end{equation}
Substituting \eqref{eq3.30} into \eqref{eq3.18} yields
\begin{equation}\label{eq3.31}
	e_1(e_1(\alpha)) = \alpha A_{2}.
\end{equation}

In order to derive another relation between $\alpha$ and $\kappa$, we first use \eqref{eq3.4} to obtain
\begin{align}\label{eq3.32}
	\beta_2 k_3 + \beta_3 k_2 &= (\beta_2 + \beta_3)(k_2 + k_3) - (\beta_2 k_2 + \beta_3 k_3) \nonumber \\
	&= \tfrac{9}{2}\alpha(\beta_2 + \beta_3) - (\beta_2 k_2 + \beta_3 k_3).
\end{align}
Differentiating \eqref{eq3.4} along $e_1$ and employing \eqref{eq3.9}, we find
\begin{align}
	\tfrac{9}{2}e_1(\alpha) &= e_1(k_2) + e_1(k_3) \nonumber \\
	&= \beta_2 k_2 + \beta_3 k_3 + \tfrac{3}{2}\alpha(\beta_2 + \beta_3). \nonumber
\end{align}
Thus, we have
\begin{equation}\label{3.321}
	\beta_2 k_2 + \beta_3 k_3 = \tfrac{9}{2}e_1(\alpha) - \tfrac{3}{2}\alpha(\beta_2 + \beta_3).
\end{equation} 
Now, substituting \eqref{3.321} into \eqref{eq3.32}, we find
\begin{equation}\label{3.322}
	\beta_2 k_3 + \beta_3 k_2 = 6\alpha(\beta_2 + \beta_3) - \tfrac{9}{2}e_1(\alpha).
\end{equation}
From \eqref{3.15} and \eqref{3.16-1}, we yield
\begin{align}
	e_1(\kappa) &= -e_1(\beta_2 \beta_3) \nonumber \\
	&= -( \beta_2 \beta_3 + c )(\beta_2 + \beta_3) + \tfrac{3}{2}\alpha (\beta_2 k_3 + \beta_3 k_2 ) \nonumber \\
	&= \kappa(\beta_2 + \beta_3) + \tfrac{3}{2}\alpha (\beta_2 k_3 + \beta_3 k_2 ).\label{eq3.323}
\end{align}
Substituting \eqref{3.322} into \eqref{eq3.323}, we obtain
\begin{equation}\label{3.324}
	e_1(\kappa) = A_{3}(\beta_2 + \beta_3) - \tfrac{27}{4}\alpha e_1(\alpha).
\end{equation}
By differentiating \eqref{eq3.30} along $e_1$ and employing \eqref{3.15}, \eqref{eq3.30}, \eqref{eq3.31}, and \eqref{3.324}, we get
\begin{equation}\label{3.325}
	e_1(\alpha) A_{4} = \alpha(\beta_2 + \beta_3) A_{5}.
\end{equation}

By multiplying \eqref{3.325} first by $e_1(\alpha)$ and then by $\beta_2 + \beta_3$, and subsequently employing \eqref{eq3.30}, we obtain:
\begin{equation}\label{3.326}
	\begin{cases}
		(e_1(\alpha))^2 A_{4} = \alpha^2 A_{1}A_{5}, \\
		A_{4}A_{1} = (\beta_2 + \beta_3)^2 A_{5}.
	\end{cases}
\end{equation}

Differentiating \eqref{3.325} along $e_1$ and using \eqref{3.15}, \eqref{eq3.30}, \eqref{eq3.31}, \eqref{3.324}, and \eqref{3.326}, we arrive at the following key polynomial relation:
\begin{align}
	0 &= 140608\,\kappa^{4} + \left(448864\,c - 6157008\,\alpha^{2}\right)\kappa^{3} \nonumber \\
	&\quad + \left(375856\,c^{2} - 11450088\,\alpha^{2}c + 59355504\,\alpha^{4}\right)\kappa^{2} \nonumber \\
	&\quad + \left(-45240\,c^{3} - 208800\,\alpha^{2}c^{2} - 36099270\,\alpha^{4}c + 485815806\,\alpha^{6}\right)\kappa \nonumber \\
	&\quad - 111600\,c^{4} + 5241780\,\alpha^{2}c^{3} - 132575616\,\alpha^{4}c^{2} \nonumber \\
	&\quad + 1642080519\,\alpha^{6}c - 6863560515\,\alpha^{8}. \label{3.327}
\end{align}

Let $\gamma = \gamma(t)$, $t \in I$, be an integral curve of $e_1$ passing through $p = \gamma(t_0)$. Since $e_2(\alpha) = e_3(\alpha) = 0$ and $e_2(\kappa) = e_3(\kappa) = 0$, and given that $e_1(\alpha) \neq 0$, we may locally view $t$ as a function of $\alpha$ in a neighborhood of $\alpha_0 = \alpha(t_0)$. Consequently, $\kappa$ can be considered a function of $\alpha$, i.e., $\kappa = \kappa(\alpha)$.

If $A_{5} = 0$ or $A_{4} = 0$, then \eqref{3.327} implies that $\alpha$ satisfies a polynomial equation of degree eight with constant coefficients. This forces $\alpha$ to be constant, yielding a contradiction. Thus, assuming $A_4, A_5 \neq 0$, \eqref{3.326} yields:
\begin{equation}\label{3.328}
	\begin{cases}
		\left( \frac{d\alpha}{dt} \right)^2 = \frac{\alpha^2 A_{1}A_{5}}{A_{4}}, \\
		(\beta_2 + \beta_3)^2 = \frac{ A_{4}A_{1}}{A_{5}}.
	\end{cases}
\end{equation}

By using \eqref{eq3.30}, \eqref{3.324}, and \eqref{3.328}, the derivative of $\kappa$ with respect to $\alpha$ is computed as follows:
\begin{align}\label{3.329}
	\frac{d\kappa}{d\alpha} &= \frac{d\kappa}{dt} \left( \frac{d\alpha}{dt} \right)^{-1} \nonumber \\
	&= \frac{A_{3} \frac{d\alpha}{dt} (\beta_2 + \beta_3)}{\left( \frac{d\alpha}{dt} \right)^2} - \frac{27}{4}\alpha \nonumber \\
	&= \frac{A_{3} A_{4}}{\alpha A_{5}} - \frac{27}{4}\alpha.
\end{align}

Next, differentiating the polynomial relation \eqref{3.327} with respect to $\alpha$ and substituting the expression for $\frac{d\kappa}{d\alpha}$ from \eqref{3.329}, we obtain a new polynomial equation in $\alpha$ and $\kappa$, which is of degree five in $\kappa$. By eliminating the $\kappa^5$ term between this new equation and \eqref{3.327}, we derive a polynomial in $\alpha$ and $\kappa$ of degree four in $\kappa$. Continuing this process to successively eliminate the highest powers of $\kappa$, we eventually arrive at a polynomial equation in $\alpha$ with constant coefficients. Consequently, $\alpha$ must be constant, yielding a contradiction to our initial assumption. Therefore, the hypersurface $M$ is minimal.
\end{proof}


\section*{Conclusion}

In this work, we investigated the geometric properties of $H$-tensional hypersurfaces $M$ in a $4$-dimensional space form $N^4(c)$. By analyzing the characterization \eqref{eq12 200} and the resulting Codazzi and Gauss equations, we established a rigorous link between the mean curvature $\alpha$ and the principal curvatures of the manifold. 

The derivation of the key polynomial relation \eqref{3.327} and the subsequent elimination process showed that a non-constant mean curvature leads to an algebraic contradiction. Consequently, we have proved that such hypersurfaces must have constant mean curvature. Given the $H$-tensional condition, this constant must vanish, leading to the final classification: every $H$-tensional hypersurface in $N^4(c)$ is minimal.

\end{document}